\newcommand\NN{{\mathbb N}}
\newcommand\ZZ{{\mathbb Z}}
\newcommand\w{{\omega}}
\newcommand\e{{\varepsilon}}
\newcommand\PP{{\mathcal P}}
\newcommand\FF{{\mathcal F}}
\newcommand\vt{{\Updelta}}
\newcommand\vm{{\bigtriangleup}}
\newcommand\J{J}
\newtheorem{theorem}{Theorem}[section]
\newtheorem{Th}[theorem]{Theorem}
\newtheorem{Lm}[theorem]{Lemma}
\newtheorem{Ps}[theorem]{Proposition}
\newtheorem{Qs}[theorem]{Question}
\newtheorem{Cr}[theorem]{Corollary}
\theoremstyle{definition}
\newtheorem{Rm}[theorem]{Remark}
\begin{document}

\title{On the subset Combinatorics of $G$-spaces}
\author{Igor Protasov and Sergii Slobodianiuk}

\subjclass{20F69, 22A15, 54D35}
\keywords{$G$-space, relative combinatorial derivation, Stone-$\check{C}$ech compactification, ultracompanion, sparse and scattered subsets.}

\date{}
\maketitle

\begin{abstract} Let $G$ be a group and let $X$ be a transitive $G$-space. 
We classify the subsets of $X$ with respect to a translation invariant ideal $\J$ in the Boolean algebra of all subsets of $X$, 
introduce and apply the relative combinatorical derivations of subsets of $X$. 
Using the standard action of $G$ on the Stone-$\check{C}$ech compactification $\beta X$ of the discrete space $X$, 
we characterize the points $p\in\beta X$ isolated in $Gp$ and describe a size of a subset of $X$ in terms of its ultracompanions in $\beta X$. 
We introduce and characterize scattered and sparse subsets of $X$ from different points of view. \end{abstract}

\section{Introduction}

Let $G$ be a group and let $X$ be a transitive $G$-space with the action $G\times X\to X$, $(g,x)\mapsto gx$. If $X=G$ and $gx$ is a product of $g$ and $x$ then $X$ is called the {\em left regular $G$-space}.

A family $\J$ of subsets of $X$ is called an ideal in the Boolean algebra $\PP_X$ of all subsets of $X$ if $X\notin\J$ and $A,B\in\J$, $C\subset A$ imply $A\cup B\in\J$ and $C\in\J$. The ideal of all finite subsets of $X$ is denoted by $[X]^{<\w}$. An ideal $\J$ is {\em translation invariant} if $gA\in\J$ for all $g\in G$, $A\in\J$, where $gA=\{ga:a\in A\}$. If $X$ is finite then $\J=\{\varnothing\}$ so in what follows all $G$-spaces are supposed to be infinite.

Now we fix a translation invariant ideal $\J$ in $\PP_X$ and say that a subset $A$ of $X$ is
\begin{itemize}
\item{} {\em $\J$-large} if $X=FA\cup I$ for some $F\in [G]^{<\w}$ and $I\in\J$;
\item{} {\em $\J$-small} if $L\setminus A$ is $\J$-large for every $J$-large subset $L$ of $X$;
\item{} {\em $\J$-thick} if $Int_F(A)\notin\J$ for each $F\in[G]^{<\w}$, where $Int_F(A)=\{a\in A:Fa\subseteq A\}$;
\item{} {\em $\J$-prethick} if $FA$ is thick for some $F\in[G]^{<\w}$.
\end{itemize}

If $\J=\varnothing$ we omit the prefix $\J$ and get a well-known classification of subsets of a $G$-spaces by their combinatorial size (see the survey \cite{b11}).

In the case of the left regular $G$-spaces, the notions of $J$-large and $\J$-small subsets appeared in \cite{b1}.

We say that a mapping $\vt_\J:\PP_X\to\PP_G$ defined by $$\vt_\J(A)=\{g\in G:gA\cap A\notin\J\}$$ is a {\em combinatorial derivation relatively to the ideal $\J$}. 
If  $X$ is the left regular $G$-space and $\J=[X]^{<\infty}$, the mapping $\vt_\J$ was introduced in \cite{b12} under the name combinatorial derivation and studied in \cite{b13}.

In Section $2$ we prove that if a subset $A$ of $X$ is not $\J$-small then $\vt_\J(A)$ is large in $G$. 
For the left regular $G$-space $X$ and $\J=[X]^{<\w}$, this statement was proved in \cite{b6}.

We endow $X$ with the discrete topology and take the points of $\beta X$, the Stone-$\check{C}$ech compactification of $X$, to be the ultrafilters on $X$, with the points of $X$ identified with the principal ultrafilters on $X$. 
The topology on $\beta X$ can be defined by stating that the set of the form $\overline{A}=\{p\in\beta X: A\in p\}$, where $A$ is a subset of $X$, form a base for the open sets. We note the sets of this form are clopen and that for any $p\in\beta X$ and $A\subset X$, $A\in p$ if and only if $p\in\overline{A}$. We denote $A^*=\overline{A}\cap X^*$, where $X^*=\beta X\setminus X$. The universal property of $\beta X$ states that every mapping $f: X\to Y$, where $Y$ is a compact Hausdorff space, can be extended to the continuous mapping $f^\beta:\beta X\to Y$.

Now we endow $G$ with the discrete topology and, using the universal property of $\beta G$, extend the group multiplication from $G$ to $\beta G$ (see \cite[Chapter 4]{b8}), so $\beta G$ becomes a compact right topological semigroup. 

We define the action of $\beta G$ on $\beta X$ in two steps. Given $g\in G$, the mapping $$x\mapsto gx: \text{  }X\to\beta X$$ extends to the continuous mapping $$p\mapsto gp:\text{ }\beta X\to \beta X.$$ Then, for each $p\in\beta X$, we extend the mapping $g\mapsto gp:\text{ }G\to\beta X$ to the continuous mapping $$q\mapsto qp:\text{ }\beta G\to\beta X.$$ Let $q\in\beta G$ and $p\in\beta X$. To describe a base for the ultrafilter $qp\in\beta X$, we take any element $Q\in q$ and, for every $g\in Q$ choose some element $P_x\in p$. Then $\bigcup_{g\in Q}gP_x\in qp$, and the family of subsets of this form is a base for the ultrafilter $qp$.

Given a subset $A$ of $X$ and an ultrafilter $p\in X^*$ we define a {\em $p$-companion} of $A$ by
$$\vm_p(A)=A^*\cap Gp=\{gp: g\in G, A\in gp\},$$
and say that a subset $S$ of $X^*$ is an {\em ultracompanion} of $A$ if $S=\vm_p(A)$ for some $p\in X^*$.

In Section $3$ we characterze the subsets of $X$ of different types in terms of their ultracompanions. 
For example a subset $A$ of $X$ is $\J$-large if and only if $\vm_p(A)\neq\varnothing$ for each $p\in\check{\J}$, where $\check{\J}=\{p\in X^*: X\setminus I\in p\text{ for every } I\in\J\}$. 
For the left regular $X$ and $\J=\{\varnothing\}$, these characterizations are obtained in \cite{b15}.

In Section $4$ we describe the points $p\in\beta X$ isolated in $Gp$ and introduce the piecewise shifted $FP$-sets in $X$ to characterize the subsets $A\subseteq X$ such that $\vm_p(A)$ is discrete for each $p\in X^*$.

In Section $5$ we extend the notions scattered and sparse subsets from groups \cite{b3} to $G$-space and characterize these subsets from different points of view.

\section{Relative combinatorial derivations}~\label{s2}

Let $X$ be a transitive $G$-space and let $\J$ be a translation invariant ideal in $\PP_X$.

\begin{Lm}\label{l2.1} For a subset $A$ of $X$, the following statements are equivalent
\begin{itemize}
\item[{\it (i)}] $A$ is $\J$-small;
\item[{\it (ii)}] $G\setminus FA$ is $\J$-large for each $F\in[G]^{<\w}$;
\item[{\it (iii)}] $A$ is not $\J$-prethick.
\end{itemize}
\end{Lm}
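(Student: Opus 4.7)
The plan is to prove $(i)\Leftrightarrow(ii)$ and $(ii)\Leftrightarrow(iii)$ separately. Two preliminary observations will be used throughout: since $G$ acts on $X$ by bijections and $\J$ is translation invariant, $\J$-largeness is shift invariant (rewrite $X=FL\cup I$ as $X=(gFg^{-1})(gL)\cup gI$) and hence so is $\J$-smallness; and the $\J$-small subsets are closed under finite unions via $L\setminus(B\cup C)=(L\setminus B)\setminus C$.

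For $(i)\Rightarrow(ii)$, observe that $X$ is itself $\J$-large, while $FA=\bigcup_{g\in F}gA$ is a finite union of $\J$-small sets and hence $\J$-small; so $X\setminus FA$ is $\J$-large. For $(ii)\Rightarrow(i)$, let $L$ be $\J$-large with $X=F_0L\cup I_0$, and apply (ii) with $F=F_0$ to obtain $X=F_1(X\setminus F_0A)\cup I_1$. If $x\in X\setminus F_0A$ and $x\notin I_0$, then $x=f_0\ell$ for some $f_0\in F_0$, $\ell\in L$, and $\ell=f_0^{-1}x\notin A$ (since $F_0^{-1}x\cap A=\varnothing$), so $x\in F_0(L\setminus A)$. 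Thus $X\subseteq F_1F_0(L\setminus A)\cup F_1I_0\cup I_1$ with $F_1I_0\cup I_1\in\J$, proving $L\setminus A$ is $\J$-large.

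For $(ii)\Leftrightarrow(iii)$, the key identity is that for any finite $F'\ni e$,
$$X\setminus F'(X\setminus FA)=\{x\in X:(F')^{-1}x\subseteq FA\}=Int_{(F')^{-1}}(FA).$$
Consequently, "$X\setminus FA$ is $\J$-large" is equivalent to the existence of a finite $F'\ni e$ with $Int_{(F')^{-1}}(FA)\in\J$ (inserting $e$ into $F'$ is harmless, since enlarging $F'$ only shrinks the set $Int$), which is precisely the negation of $\J$-thickness of $FA$. Negating both sides: the failure of (ii) for some $F$ is equivalent to $FA$ being $\J$-thick for some $F$, i.e., to $A$ being $\J$-prethick, which is the failure of (iii).

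I anticipate no serious obstacle. The points requiring care are the shift-invariance bookkeeping in the preliminaries and the identity in the last paragraph, which translates the "$\J$-large complement" condition into the "$\J$-small interior" condition; once that identity is in hand, the equivalence $(ii)\Leftrightarrow(iii)$ is essentially immediate.
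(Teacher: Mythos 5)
Your proof is correct and complete; it is the standard argument that the paper itself delegates to Theorem 2.1 of \cite{b1} (adapted from the group case to $G$-spaces), including the sensible reading of the statement's typos ($X\setminus FA$ for $G\setminus FA$, and $\J$-thick in the definition of $\J$-prethick). Both the shift-invariance bookkeeping and the identity $X\setminus F'(X\setminus FA)=Int_{(F')^{-1}}(FA)$ for $e\in F'$ check out.
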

\begin{proof} Apply the arguments proving Theorem $2.1$ in \cite{b1}. \end{proof}

The next lemma follows directly from the definition of $\J$-small subsets.

\begin{Lm}\label{l2.2} The family of all $\J$-small subsets of $X$ is a translation invariant ideal in $\PP_X$. \end{Lm}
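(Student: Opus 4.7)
The plan is to verify the four defining properties of a translation invariant ideal: (a) $X$ is not $\J$-small, (b) subsets of $\J$-small sets are $\J$-small, (c) finite unions of $\J$-small sets are $\J$-small, and (d) $g$-translates of $\J$-small sets are $\J$-small for every $g\in G$.

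For (a), I note that $X$ is itself $\J$-large (take $F=\{e\}$, $I=\varnothing$), whereas $X\setminus X=\varnothing$ is not $\J$-large (otherwise $X=F\varnothing\cup I=I\in\J$, contradicting $X\notin\J$); hence $X$ fails the definition of $\J$-smallness. For (b), if $C\subseteq A$ and $L$ is $\J$-large, then $L\setminus A\subseteq L\setminus C$, and since $L\setminus A$ is $\J$-large by hypothesis, so is $L\setminus C$ (any superset of a $\J$-large set is clearly $\J$-large from the definition). For (c), given $A,B$ $\J$-small and $L$ $\J$-large, apply the definition twice: $L\setminus A$ is $\J$-large, and then $(L\setminus A)\setminus B=L\setminus(A\cup B)$ is $\J$-large, so $A\cup B$ is $\J$-small.

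The one step that needs a small preliminary observation is (d). I would first record the auxiliary fact that translations of $\J$-large sets are $\J$-large: if $X=FL\cup I$ with $F\in[G]^{<\w}$ and $I\in\J$, then for any $g\in G$, using that $G$ acts on $X$ by bijections and that $\J$ is translation invariant, we get $X=gX=(gFg^{-1})(gL)\cup gI$, so $gL$ is $\J$-large; the same applies to $g^{-1}L$. Given this, for $A$ $\J$-small and any $\J$-large $L$, the set $g^{-1}L$ is $\J$-large, hence $g^{-1}L\setminus A$ is $\J$-large, and applying $g$ (a bijection on $X$) yields $L\setminus gA=g(g^{-1}L\setminus A)$, which is $\J$-large. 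Thus $gA$ is $\J$-small.

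The proof is essentially bookkeeping against the definitions, exactly as the authors signal by calling it direct. The only mild subtlety is the translation-of-large-is-large observation in (d), which is where one actually uses translation invariance of $\J$ together with the fact that the group acts by bijections on $X$; everything else is a one-line manipulation of complements.
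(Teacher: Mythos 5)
Your proof is correct and is exactly the routine verification the paper has in mind when it says the lemma ``follows directly from the definition'': each of the four properties is checked as one would expect, and your auxiliary observation that translates of $\J$-large sets are $\J$-large (using translation invariance of $\J$ and the fact that $G$ acts by bijections) is the one genuinely needed ingredient for translation invariance. No gaps.
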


\begin{Lm}\label{l2.3} Let $L$ be a $\J$-large subset of $X$. 
Then given a partition $L=A\cup B$, either $\vt_\J(A)$ is large or $B$ is $\J$-large. \end{Lm}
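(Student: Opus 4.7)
My plan is to prove the contrapositive: assume $B$ is not $\J$-large and derive that $\vt_\J(A)$ is large in $G$. Unpacking the $\J$-largeness of $L$, fix $F\in[G]^{<\w}$ and $I\in\J$ with $X=FL\cup I=FA\cup FB\cup I$. I will show that the same $F$ witnesses largeness, namely $G=F\vt_\J(A)$.

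The pivotal computation is: for any $g\in G$ and $f\in F$, the translation-invariance of $\J$ yields $gA\cap fA\in\J$ iff $A\cap g^{-1}fA\in\J$ iff $g^{-1}f\notin\vt_\J(A)$. Summing over $f\in F$ and using the symmetry $\vt_\J(A)=\vt_\J(A)^{-1}$ (immediate from the $G$-invariance of $\J$), one obtains
\[
gA\cap FA\in\J\iff g\notin F\vt_\J(A).
\]

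Now suppose, toward contradiction, that some $g\notin F\vt_\J(A)$ exists. By the equivalence above, $gA\cap FA\in\J$. Since $X\setminus FA\subseteq FB\cup I$, this forces $gA\subseteq FB\cup I_1$ for some $I_1\in\J$. Translating by $g^{-1}$ (an operation preserving $\J$) and then spreading by $F$ on the left gives $FA\subseteq(Fg^{-1}F)B\cup J$ with $J\in\J$. Combining with $X=FA\cup FB\cup I$ yields
\[
X=(Fg^{-1}F\cup F)B\cup(J\cup I),
\]
which makes $B$ $\J$-large, contradicting our assumption. Hence no such $g$ exists and $G=F\vt_\J(A)$.

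The main obstacle I anticipate is choosing the right intersection to analyse at the outset: one must work with $gA\cap FA$ (using the same $F$ from the $\J$-largeness of $L$), not the more obvious $gA\cap A$. Without this choice one would only recover a weaker two-sided condition such as $G=F\vt_\J(A)F^{-1}$, which does not suffice for genuine left-syndeticity in a non-abelian $G$. Beyond this conceptual step, the remainder is routine bookkeeping: tracking translation-invariance of $\J$ and finite closure under unions. The finite witness $Fg^{-1}F\cup F$ depends on $g$, but this is harmless since producing even one such $g$ already gives the required cover of $X$.
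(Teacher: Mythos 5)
Your proof is correct and follows essentially the same route as the paper: fix the finite set $F$ witnessing the $\J$-largeness of $L$, take $g\notin F\vt_\J(A)$, use the symmetry of $\vt_\J(A)$ and translation invariance of $\J$ to see that $gA$ lies in $FB$ modulo $\J$, and translate back to cover $X$ by $(Fg^{-1}F\cup F)B$ modulo $\J$. Your set-level formulation via the equivalence $gA\cap FA\in\J\iff g\notin F\vt_\J(A)$ is a cleaner packaging of the paper's element-by-element bookkeeping with the sets $I_i=f_i^{-1}gA\cap A$, but it is the same argument.
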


\begin{proof} We take $F\in[G]^{<\w}$ and $I\in\J$ such that $G=F(A\cup B)\cup I$. 
Assume that $G\neq F\vt_\J(A)$ and show that $B$ is $\J$-large.

Let $F=\{f_1,...,f_k\}$. We take $g\in G\setminus F\vt_\J(A)$ and put $I_i=f_i^{-1}gA\cap A$, $i\in\{1,...,k\}$. 
Since $g\notin f_i\vt_\J(A)$, we have $I_i\in\J$ and $f_i^{-1}gx\notin A$ for each $x\in A\setminus I_i$.

If $x\in X$ and $F^{-1}gx\cap L=\varnothing$ then $gx\notin FL$ so $gx\in I$ and $x\in g^{-1}I$. We put 
$$I'=I_1\cup...\cup I_k\cup g^{-1}I.$$
If $x\in A\setminus I'$ then there is $i\in\{1,...,k\}$ such that $f_i^{-1}gx\in A\cup B$. Since $f_i^{-1}gx\notin A$, we have $f_i^{-1}gx\in B$. Hence, $A\setminus I'\subseteq F^{-1}gB$ and
$$G=F(A\setminus I')\cup FI'\cup FB\cup I=FF^{-1}gB\cup FB\cup(FI'\cup I),$$
and we conclude that $B$ is $\J$-large.
\end{proof}

\begin{Th}\label{t2.4} If a subset $A$ of $X$ is $\J$-prethick then $\vt_\J(A)$ is large. \end{Th}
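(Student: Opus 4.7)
The plan is to prove the contrapositive: assume $\vt_\J(A)$ is not large in $G$ and deduce that $A$ is $\J$-small, which by Lemma~\ref{l2.1} is equivalent to $A$ not being $\J$-prethick. Thus given any $\J$-large $L\subseteq X$, I must show that $L\setminus A$ is $\J$-large.

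The first step is a monotonicity observation: for any $S\subseteq A$ one has $\vt_\J(S)\subseteq\vt_\J(A)$. Indeed, if $g\in\vt_\J(S)$ then $gS\cap S\notin\J$, and since $gS\cap S\subseteq gA\cap A$ while $\J$ is closed under subsets, $gA\cap A\notin\J$, hence $g\in\vt_\J(A)$. Since largeness of a subset of $G$ is upward closed under inclusion (if $G=FT$ and $T\subseteq T'$, then $G=FT'$), non-largeness of $\vt_\J(A)$ transfers to $\vt_\J(S)$ for every $S\subseteq A$.

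The second step is a direct application of Lemma~\ref{l2.3} to the partition $L=(L\cap A)\cup(L\setminus A)$ of the fixed $\J$-large set $L$. The lemma presents two alternatives: either $\vt_\J(L\cap A)$ is large in $G$, or $L\setminus A$ is $\J$-large. Taking $S=L\cap A$ in the monotonicity step rules out the first alternative, so $L\setminus A$ must be $\J$-large. As $L$ was arbitrary, $A$ is $\J$-small, completing the proof by contraposition.

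I do not expect any real obstacle here: the argument is a clean one-shot application of Lemma~\ref{l2.3} once one notices that $\vt_\J$ respects inclusions. The only conceptual move is recognising that the non-largeness hypothesis on $\vt_\J(A)$, together with monotonicity of $\vt_\J$, is exactly what is needed to exclude the first alternative of Lemma~\ref{l2.3} simultaneously for every choice of $\J$-large $L$; no induction on $|F|$ (where $FA$ is $\J$-thick) is required.
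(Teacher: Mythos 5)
Your proposal is correct and is essentially the paper's own argument read contrapositively: the paper uses Lemma~\ref{l2.1} to extract a $\J$-large $L$ with $L\setminus A$ not $\J$-large, applies Lemma~\ref{l2.3} to the partition $L=(L\cap A)\cup(L\setminus A)$ to conclude $\vt_\J(L\cap A)$ is large, and then uses the same monotonicity of $\vt_\J$ (stated only implicitly there) to pass to $\vt_\J(A)$. Your version merely quantifies over all $\J$-large $L$ instead of fixing a single witness, so there is nothing substantively different to compare.
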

\begin{proof} By Lemma~\ref{l2.1}, $A$ is not $\J$-small. 
We take a $\J$-large subset $L$ such that $L\setminus A$ is not $\J$-large. 
Since $L=(L\cap A)\cup(L\setminus A)$, by Lemma~\ref{l2.3}, $\vt_\J(L\cap A)$ is large so $\vt_\J(A)$ is large. \end{proof}

\begin{Cr} If an $\J$-prethick subset $A$ of $X$ is finitely partitioned $A=A_1\cup...A_n$ then $\vt_\J(A_i)$ is large for some $i\in\{1,...,n\}$ \end{Cr}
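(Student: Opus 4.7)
The plan is to reduce the corollary to Theorem~\ref{t2.4} by showing that at least one of the pieces $A_i$ must itself be $\J$-prethick; then Theorem~\ref{t2.4} immediately yields that $\vt_\J(A_i)$ is large.

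First, I would argue by contradiction: suppose that none of $A_1,\dots,A_n$ is $\J$-prethick. By the equivalence {\it (i)}$\Leftrightarrow${\it (iii)} in Lemma~\ref{l2.1}, this is equivalent to saying that every $A_i$ is $\J$-small. Next, I would invoke Lemma~\ref{l2.2}, which asserts that the family of $\J$-small subsets forms a (translation invariant) ideal in $\PP_X$; in particular, it is closed under finite unions. Applying this to $A=A_1\cup\cdots\cup A_n$ gives that $A$ itself is $\J$-small.

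On the other hand, $A$ is assumed to be $\J$-prethick, so by Lemma~\ref{l2.1} again $A$ cannot be $\J$-small. This contradiction forces some $A_i$ to be $\J$-prethick, and then Theorem~\ref{t2.4} gives that $\vt_\J(A_i)$ is large, as required.

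I do not expect any real obstacle here: the corollary is essentially bookkeeping, combining the ideal property of $\J$-small sets (Lemma~\ref{l2.2}) with the Small$\,$=$\,$not Prethick equivalence (Lemma~\ref{l2.1}) and the main theorem of the section. The only thing worth checking carefully is that the version of Lemma~\ref{l2.1} being invoked does not require $A$ to sit inside some larger structure, so that the argument works verbatim for each partition block $A_i$ and for $A$ itself.
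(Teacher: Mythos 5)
Your proof is correct and is exactly the paper's argument, just written out in full: the paper's one-line proof also uses Lemma~\ref{l2.2} (the ideal property of $\J$-small sets, together with the equivalence in Lemma~\ref{l2.1}) to conclude that some cell $A_i$ is $\J$-prethick, and then applies Theorem~\ref{t2.4}.
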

\begin{proof} By Lemma~\ref{l2.2} some cell $A_i$ is prethick. Apply Theorem~\ref{t2.4}. \end{proof}

\begin{Rm} Given a translation invariant ideal $\J$ in $\PP_X$, there is a function $\Phi_\J:\NN\to\NN$ such that, for any $n$-partition $X_1\cup...\cup X_n$ of $X$, there exists $A_i$ and $F\in[G]^{<\w}$ such that $G=F\vt_\J(A_i)$ and $|F|\le\Phi_\J(n)$. 
These functions are intensively studied in \cite{b2} and \cite{b4}.\end{Rm}

\section{Ultracompanions}

Given a translation invariant ideal $\J$ in $\PP_X$, we denote 
$$\check{\J}=\{p\in X^*: X\setminus I\in p\text{ for each } I\in\J\},$$
and observe that $\check{\J}$ is closed in $X^*$ and $gp\in\check{\J}$ for all $g\in G$ and $p\in\check{\J}$.

\begin{Th} For a subset $A$ of $X$, the following statements hold 
\begin{itemize}
\item[{\it (i)}] $A$ is $\J$-large if and only if $\vm_p(A)\neq\varnothing$ for each $p\in\check{\J}$;
\item[{\it (ii)}] $A$ is $\J$-thick if and only if there exists $p\in\check{\J}$ such that $\vm_p(A)=Gp$;
\item[{\it (iii)}] $A$ is $\J$-prethich if and only if there exists $p\in\check{\J}$ and $F\in[G]^{<\w}$ such that $\vm_p(FA)=Gp$;
\item[{\it (iv)}] $A$ is $\J$-small if and only if for every $p\in\check{\J}$ and every $F\in[G]^{<\w}$, we have $\vm_p(A)\neq Gp$;.
\end{itemize}
\end{Th}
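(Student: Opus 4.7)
All four statements rest on a common dictionary. Since the $G$-action on $\beta X$ satisfies $B\in gp$ iff $g^{-1}B\in p$, we have $\vm_p(A) = \{gp : g\in G,\ g^{-1}A\in p\}$, so $\vm_p(A)\neq\varnothing$ translates to ``some translate $hA$ lies in $p$'' and $\vm_p(A)=Gp$ translates to ``every translate $hA$ lies in $p$''. Combined with the elementary identity $\bigcap_{h\in F}hA = Int_{F^{-1}}(A)$ valid whenever $e\in F$, this dictionary converts each combinatorial property of $A$ into a statement about ultrafilters in $\check{\J}$, which will be realized by FIP arguments.

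For (i), the forward direction is immediate: if $X=FA\cup I$ with $I\in\J$, then any $p\in\check{\J}$ has $I\notin p$, hence $FA\in p$, hence $fA\in p$ for some $f\in F$, giving $f^{-1}p\in\vm_p(A)$. For the converse, failure of $\J$-largeness yields $X\setminus(FA\cup I)\neq\varnothing$ for every finite $F\subseteq G$ and $I\in\J$, so the family $\{X\setminus hA:h\in G\}\cup\{X\setminus I:I\in\J\}$ has the finite intersection property and extends to an ultrafilter $p\in\check{\J}$ with $\vm_p(A)=\varnothing$.

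For (ii), the identity above shows that $\J$-thickness of $A$ is exactly the FIP of the family $\{hA:h\in G\}\cup\{X\setminus I:I\in\J\}$; an ultrafilter $p$ extending this family satisfies $\vm_p(A)=Gp$ and lies in $\check{\J}$. Conversely, existence of such $p$ forces $Int_F(A)\in p$ for every finite $F$ with $e\in F$, hence $Int_F(A)\notin\J$, i.e., $\J$-thickness. Part (iii) is (ii) applied with $FA$ in place of $A$, since by definition $A$ is $\J$-prethick iff some $FA$ is $\J$-thick. Part (iv) is then the contrapositive of (iii) combined with Lemma~\ref{l2.1}, which identifies $\J$-small with not-$\J$-prethick.

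The main technical obstacle is ensuring that the ultrafilters furnished by these FIP arguments lie in $\check{\J}\subseteq X^*$, i.e., are non-principal. Transitivity of the $G$-action provides the needed escape: a principal witness $x$ in the converse of (i) would force $Gx\cap A=\varnothing$, hence $A=\varnothing$, while in (ii) it would force $Gx\subseteq A$, hence $A=X$; both extremes can be dispatched directly. With these trivial cases handled, everything else is bookkeeping with the dictionary above.
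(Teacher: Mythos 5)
Your proposal is correct and takes essentially the same route as the paper: translate each property into a statement about which translates $hA$ belong to an ultrafilter, and use compactness of $\beta X$ together with the identity $\bigcap_{h\in F}hA=Int_{F^{-1}}(A)$. The only cosmetic differences are that you run the converse of (i) in its contrapositive/finite-intersection-property form where the paper extracts a finite subcover of the compact set $\check{\J}$, and you prove (ii) by a direct FIP argument where the paper reduces it to (i) via the observation that $A$ is $\J$-thick iff $X\setminus A$ is not $\J$-large; your handling of the principal-ultrafilter degeneracies via transitivity is a sound (and slightly more careful) supplement.
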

\begin{proof} $(i)$ Suppose that $A$ is $\J$-large and choose $F\in[G]^{<\w}$ and $I\in\J$ such that $X=FA\cup I$. 
We take an arbitrary $p\in\check{\J}$ and choose $g\in F$ such that $gA\in p$ so $A\in g^{-1}p$ and $\vm_p(A)\neq\varnothing$ 

Assume that $\vm_p(A)\neq\varnothing$ for each $p\in\J$. Given $p\in\J$, we choose $g_p\in G$ such that $A\in g_pp$. 
Then we consider a covering of $\check{\J}$ by the subsets $\{g_p^{-1}A^*:p\in\check{\J}\}$ and choose its finite subcovering $g_{p_1}^{-1}A^*,...,g_{p_n}^{-1}A^*$
We take $I\in\J$ and $H\in[X]^{<\w}$ such that $X\setminus (g_{p_1}^{-1}A^*\cup...\cup g_{p_n}^{-1}A^*)=I\cup H$. 
At last, we choose $F\in[G^{<\w}]$ such that $\{g_{p_1}^{-1},...,g_{p_n}^{-1}\}\subseteq F$ and $H\subseteq FA$.
Then $X=FA\cup I$ and $A$ is $\J$-large.

$(ii)$ We note that $A$ is $\J$-thick if and only if $X\setminus A$ is not $\J$-large and apply $(i)$. 

$(iii)$ follows from $(ii)$.

$(iv)$ follows from $(iii)$ and Lemma~\ref{l2.1}.
\end{proof}

We suppose that $\J\neq\{\varnothing\}$ and say that a subset $A$ of $X$ is $\J$-thin if, for every $F\in[G]^{<\w}$, there exists $I\in\J$ such that $|Fa\cap A|\le1$ for each $a\in A\setminus I$.

\begin{Th} A subset $A$ of $X$ is $I$-thin if and only if $\vm_p(A)\le1$ for each $p\in\J$. \end{Th}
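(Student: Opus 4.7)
The plan is an ultrafilter calculation that exploits the $G$-invariance of $\check{\J}$ and couples pairs of distinct ultrafilters in $\vm_p(A)$ with witnesses of failure of $\J$-thinness. I read the statement as: $A$ is $\J$-thin iff $|\vm_p(A)|\le 1$ for every $p\in\check{\J}$.

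For $(\Rightarrow)$, assume $A$ is $\J$-thin and take $p\in\check{\J}$; suppose for contradiction that $g_1p\neq g_2p$ are both in $\vm_p(A)$. Splitting $X$ by a set separating the two ultrafilters and intersecting with $A$ produces disjoint $A_1,A_2\subseteq A$ with $A_i\in g_ip$, whence $C:=g_1^{-1}A_1\cap g_2^{-1}A_2\in p$. Setting $h:=g_2g_1^{-1}$ and applying $\J$-thinness to $F=\{e,h\}$ yields $I\in\J$ such that $a\in A$ and $ha\in A$ together force $a\in I$. Every $x\in C$ satisfies $g_1x\in A$ and $h\cdot g_1x=g_2x\in A$, so $g_1x\in I$; hence $C\subseteq g_1^{-1}I$ and $I\in g_1p$, contradicting $g_1p\in\check{\J}$ (using that $\check{\J}$ is $G$-invariant).

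For $(\Leftarrow)$, suppose $A$ is not $\J$-thin and fix a witness $F\in[G]^{<\w}$. The set $A':=\{a\in A:|Fa\cap A|\ge2\}$ cannot lie in $\J$ (else $I=A'$ in the definition of $\J$-thin is self-contradictory). For each $a\in A'$ pick an ordered pair $(f_1(a),f_2(a))\in F\times F$ with $f_1(a)a$ and $f_2(a)a$ distinct points of $A$; pigeonhole over $F\times F$ produces a fixed pair $(f_1,f_2)$, $f_1\neq f_2$, occurring on a subset $A''\subseteq A'$ with $A''\notin\J$. Since $\J\neq\{\varnothing\}$ together with translation invariance and transitivity forces $\J\supseteq[X]^{<\w}$, the family $\{A''\}\cup\{X\setminus I:I\in\J\}$ has the finite intersection property and extends to an ultrafilter $p\in\check{\J}$ with $A''\in p$. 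The inclusions $f_iA''\subseteq A$ give $A\in f_ip$, so $f_1p,f_2p\in\vm_p(A)$.

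The main obstacle is verifying $f_1p\neq f_2p$. Otherwise $gp=p$ for $g:=f_2^{-1}f_1\neq e$, and the plan is to invoke the standard dynamical fact that this forces the fixed-point set $X_0:=\{x\in X:gx=x\}$ to belong to $p$: on $X\setminus X_0$ the cyclic group $\langle g\rangle$ acts freely, and its orbits (finite cycles of length $\ge 2$ or bi-infinite paths) admit a $3$-coloring $X\setminus X_0=Y_1\sqcup Y_2\sqcup Y_3$ with $Y_i\cap gY_i=\varnothing$, so if $X\setminus X_0\in p$ then some $Y_i\in p$ while $gY_i\in gp=p$ with $gY_i\cap Y_i=\varnothing$, a contradiction. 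Hence $X_0\in p$, but by construction $A''\cap X_0=\varnothing$, contradicting $A''\in p$; so $|\vm_p(A)|\ge 2$.
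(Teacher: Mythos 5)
Your proof is correct and follows essentially the same route as the paper's: a translation-of-ideals argument for the forward direction, and for the converse an ultrafilter concentrated on a non-ideal set of witnesses of non-thinness, with a pigeonhole over $F$ to fix the shifts. The only difference is one of detail: you explicitly justify $f_1p\neq f_2p$ via the fixed-point/$3$-colouring argument (the content of \cite[Lemma 3.33]{b8}), a step the paper compresses into ``since $p$ is an ultrafilter, there exists $g\in F$ such that $gp\neq p$ and $A\in gp$.''
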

\begin{proof} Suppose that $A$ is not $\J$-thin and choose $F\in[G]^{<\w}$ such that, for each $I\in\J$, there is $a_I\in A\setminus I$ satisfying $Fa_I\cap A\neq\{a_I\}$.
We pick $g_I\in F$ and $b_I\in A$ such that $g_Ia_I=b_I$ and $b_I\in A$. 
Then we put $A_I=\{a_{I'}: I\subseteq I', I'\in\J\}$ and take $p\in\check{\J}$ such that $A_I\in p$ for each $I\in\J$.
Since $p$ is an ultrafilter, there exists $g\in F$ such that $gp\neq p$ and $A\in gp$.
Hence $\{p,gp\}\subseteq\vm_p(A)$ and $|\vm_p(A)|>1$.

Assume that $|\vm_p(A)|>1$ for some $p\in\J$.
We pick distinct $g_1p, g_2p\in\vm_p(A)$ and put $F=\{g_2g_1^{-1}\}$. 
Since $A\setminus I\in g_1p\cap g_2p$ for each $I\in\J$, there is $a_I\in A\setminus I$ such that $g_2^{-1}g_1a_I\in A\setminus\{a_I\}$.
Hence, $A$ is not $\J$-thin.
\end{proof}

\begin{Rm} We say that a non-empty subset $S$ of $\beta X^*$ is invariant if $gS\subseteq S$ for each $g\in G$. 
It is easy to see that each closed invariant subset $S$ of $X$ contains a minimal by inclusion closed invariant subset $M$ and $M=cl(Gp)$ for each $p\in M$.
By analogy with Theorem $4.39$ from \cite{b8}, we can prove that for $p\in X^*$ the subset $cl(Gp)$ is minimal if 
and only if, for every $P\in p$, there exists $F\in[G]^{\w}$ such that $Gp\subseteq (FP)^*$.
\end{Rm}

\begin{Rm} Given a translation invariant ideal $\J$ in $\PP_X$, we denote
$$K(\check{\J})=\bigcup\{M:M\text{ is a minimal closed invariant subset of }\check{\J}\}.$$
By analogy with Theorem $4.40$ from \cite{b8}, we can prove that $p\in cl(K(\check{\J}))$ if and only if each subset $P\in p$ is $\J$-prethick.
It is worth to be mentioned that each closed invariant subset $S$ of $X^*$ is of the form $S=\check{J}$ for some translation invariant ideal $\J$ in $\PP_X$.
\end{Rm}

\begin{Rm} By Theorem $6.30$ from \cite{b8}, for every infinite group of cardinality $\varkappa$, there exists $2^{2^\varkappa}$ distinct minimal closed invariant subsets of $G^*$. 
We show that this statement fails to be true for $G$-spaces.
Let $X=\w$ and $G$ be the group of all permutations of $X$.
If $S$ is a closed invariant subset of $X^*$ then $S=X^*$.\end{Rm}

\begin{Rm} We describe a relationship between ultracompanions and relative combinatorial derivations. 
Let $\J$ be a translation invariant ideal in $\PP_X$, $A\subseteq X$, $p\in\check{J}$.
We denote $A_p=\{g\in G: A\in gp\}$ so $\vm_p(A)=A_pp$. Then $$\vt_\J(A)=\bigcap\{A_p^{-1}:p\in\check{\J},A\in p\}.$$\end{Rm}

\section{Isolated points}

Given any $p\in X^*$, we put $$St(p)=\{g\in G: gp=p\},$$
and note that, by \cite[Lemma 3.33]{b8}, $gp=p$ if and only if there exists $P\in p$ such that $gx=x$ for each $x\in P$.

\begin{Th}\label{t4.1} For every $p\in X^*$, the following statements are equivalent 
\begin{itemize}
\item[{\it (i)}] $p$ is not isolated in $Gp$;
\item[{\it (ii)}] there exists $q\in(G\setminus St(p))^*$ such that $qp=p$;
\item[{\it (iii)}] there exists $\varepsilon\in(G\setminus St(p))^*$ such that $\varepsilon\varepsilon=\varepsilon$ and $\varepsilon p=p$.
\end{itemize}\end{Th}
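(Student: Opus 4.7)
The plan is to prove the cycle $(iii)\Rightarrow(ii)\Rightarrow(i)\Rightarrow(ii)\Rightarrow(iii)$; the last step carries the main content, while $(iii)\Rightarrow(ii)$ is immediate on taking $q=\varepsilon$. For $(ii)\Rightarrow(i)$ I would use continuity of the map $q'\mapsto q'p\colon\beta G\to\beta X$: the image of $\overline{G\setminus St(p)}$ is contained in the closure of $(G\setminus St(p))p=Gp\setminus\{p\}$, so $p=qp$ is a limit point of $Gp\setminus\{p\}$ and hence is not isolated in $Gp$. For $(i)\Rightarrow(ii)$ the hypothesis unfolds to: for every $P\in p$ the set $A_P=\{g\in G\setminus St(p):g^{-1}P\in p\}$ is non-empty. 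Since $A_{P_1\cap P_2}\subseteq A_{P_1}\cap A_{P_2}$, the family $\{A_P:P\in p\}$ is a filter base on $G\setminus St(p)$; any ultrafilter $q$ extending it lies in $(G\setminus St(p))^*$ and satisfies $qp=p$.

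The substantive implication is $(ii)\Rightarrow(iii)$. Set $H=\{q'\in\beta G:q'p=p\}$; by continuity of $q'\mapsto q'p$ and associativity of the action of $\beta G$ on $\beta X$, $H$ is a closed subsemigroup of $\beta G$. The structural fact I plan to establish is that the given $q$ contains no left coset of $St(p)$: if $g_0St(p)\in q$, setting $q'=g_0^{-1}q$ gives $St(p)\in q'$, so $q'\in\overline{St(p)}\subseteq H$, and $qp=g_0(q'p)=g_0p$; combined with $qp=p$ this forces $g_0\in St(p)$, contradicting $G\setminus St(p)\in q$. Because every set of the form $g^{-1}St(p)$ is a left coset of $St(p)$, it follows that $\{g\in G:g^{-1}St(p)\in q\}=\varnothing$, and the standard formula for ultrafilter multiplication then gives $St(p)\notin hq$ for every $h\in\beta G$; in particular, $Hq\subseteq\overline{G\setminus St(p)}$.

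To finish, observe that $Hq$ is a compact subsemigroup of $\beta G$: it is closed as the image of $H$ under the continuous right multiplication by $q$, and closed under products because $(h_1q)(h_2q)=h_1(qh_2)q\in Hq$. Ellis's idempotent theorem therefore supplies $\varepsilon\in Hq\subseteq H\cap\overline{G\setminus St(p)}$ with $\varepsilon\varepsilon=\varepsilon$. Since the identity is the only idempotent of $G$ and it lies in $St(p)$, the idempotent $\varepsilon$ must be non-principal, so $\varepsilon\in(G\setminus St(p))^*$ as required. The main obstacle I anticipate is the coset argument ensuring $Hq\subseteq\overline{G\setminus St(p)}$: without it, Ellis's theorem would only produce an idempotent in $H$ that a priori could lie in $\overline{St(p)}$ and hence fail to witness $(iii)$.
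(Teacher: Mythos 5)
Your argument is correct and follows essentially the same route as the paper: the substantive implication $(ii)\Rightarrow(iii)$ is obtained from Ellis's idempotent theorem applied to a compact right topological subsemigroup of $\overline{G\setminus St(p)}$ whose elements fix $p$, and the key point in both proofs is the same observation that an ultrafilter $q$ with $qp=p$ and $G\setminus St(p)\in q$ cannot contain any left coset of $St(p)$. The only cosmetic difference is that the paper verifies directly that $S=\{q\in(G\setminus St(p))^*:qp=p\}$ is itself a subsemigroup (and treats $(i)\Leftrightarrow(ii)$ as evident), whereas you extract the idempotent from $Hq$.
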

\begin{proof} 
The implications $(i)\Rightarrow(ii)$ and $(iii)\Rightarrow(i)$ are evident.

$(ii)\Rightarrow(iii)$. In view of Theorem $2.5$ from \cite{b8}, it suffices to show that the set
$$S=\{q\in(G\setminus St(p))^*:qp=p\}$$ is a subsemigroup of $G^*$.
Let $q,r\in S$, $Q\in q$. For each $x\in Q$, we choose $R_x\in r$ such that $x^{-1}St(p)\cap R_x=\varnothing$.
Then $xy\notin St(p)$ for each $y\in R_x$. 
We put $$P=\bigcup_{x\in Q}xR_x,$$ and note  that $P\in qr$ and $P\cap St(p)=\varnothing$. Hence $qr\in S$.
\end{proof}

\begin{Rm} For each $g\in G$, the mapping $p\mapsto gp:\text{ }\beta X\to\beta X$ is a homeomorphism. 
It follows that $Gp$ has an isolated point if and only if $Gp$ is discrete.\end{Rm}

Let $(g_n)_{n\in\w}$ be sequence in $G$ and let $(x_n){n\in\w}$ be a sequence in $X$ such that
\begin{itemize}
\item[(1)] $\{g_0^{\e_0}...g_n^{\e_n}x_n:\e_i\in\{0,1\}\}\cap\{g_0^{\e_0}...g_n^{\e_m}x_m:\e_i\in\{0,1\}\}=\varnothing$ for all distinct $m,n\in\w$;
\item[(2)] $|\{g_0^{\e_0}...g_n^{\e_n}x_n:\e_i\in\{0,1\}\}|=2^{n+1}$ for every $n\in\w$.
\end{itemize}

We say that a subset $Y$ of $X$ is a {\em piecewise shifted $FP$-set} if there exist 
$(g_n)_{n\in\w}$, $(x_n)_{n\in\w}$ satisfying $(1)$ and $(2)$ such that
$$Y=\{g_0^{\e_0}...g_n^{\e_n}x_n:\e_i\in\{0,1\}, n\in\w\}.$$
For definition of an $FP$-set in a group see \cite[p. 108]{b8}.

\begin{Th}\label{t4.3} Let $p$ be an ultrafilter from $X^*$ such that $Gp$ is not discrete. Then every subset $P\in p$ contains a piecewise shifted $FP$-set. \end{Th}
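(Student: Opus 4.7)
The plan is to invoke Theorem~\ref{t4.1}: since $Gp$ is not discrete, $p$ is not isolated in $Gp$, and so there is an idempotent $u\in(G\setminus St(p))^*$ with $up=p$. Fix $P\in p$. Since $P\in p=up$, the set $T=\{g\in G:g^{-1}P\in p\}$ belongs to $u$, and $G\setminus St(p)\in u$ by the choice of $u$.

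I would construct $(g_n)\subseteq G\setminus St(p)$ and $(x_n)\subseteq X$ by induction on $n$. At stage $n$, choose $g_n$ from a suitable member of $u$ so that every product $h_\e:=g_0^{\e_0}\cdots g_n^{\e_n}$ with $\e=(\e_0,\ldots,\e_n)\in\{0,1\}^{n+1}$ lies in $T$; this is the classical idempotent $FP$-set construction (see \cite{b8}), based on iterating $A\mapsto A^\star:=\{g\in A:g^{-1}A\in u\}$. Then pick $x_n$ in the $p$-large set $P\cap\bigcap_{\e\in\{0,1\}^{n+1}}h_\e^{-1}P$, additionally avoiding the finite set of back-translates of previously chosen points (for condition~(1)) and the fixed-point sets $\{x\in X:(h_{\e'}^{-1}h_\e)x=x\}$ for $\e\neq\e'$ (for condition~(2)).

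The technical obstacle is the inductive invariant $h_{\e'}^{-1}h_\e\notin St(p)$ for all distinct $\e,\e'\in\{0,1\}^{n+1}$, needed so those fixed-point sets lie outside $p$. Maintaining it rests on two facts about $u$. First, for any $a\notin St(p)$, the coset $a\,St(p)\notin u$: the continuous map $\rho:\beta G\to\beta X$, $q\mapsto qp$, is constant equal to $ap$ on $a\,St(p)$, so $a\,St(p)\in u$ would force $p=up=ap$, hence $a\in St(p)$. Second, for any $t\notin St(p)$, $\{g\in G:g^{-1}tg\in St(p)\}\notin u$: otherwise the continuous extensions of $g\mapsto gp$ and $g\mapsto tgp$ from $G$ to $\beta X$ would agree at $u$, yielding $tp=p$. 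Consequently, at stage $n$, $g_n$ may be drawn from a member of $u$ lying in $T\cap(G\setminus St(p))$, missing the finitely many cosets $g_\alpha^{-1}g_{\alpha'}St(p)$, and meeting each $\{g:g^{-1}(g_{\alpha'}^{-1}g_\alpha)g\notin St(p)\}$ for $\alpha\neq\alpha'\in\{0,1\}^n$, where $g_\alpha:=g_0^{\alpha_0}\cdots g_{n-1}^{\alpha_{n-1}}$; a case split on whether $\e_n=\e_n'$ then carries the invariant from level $n-1$ to level $n$.

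The main difficulty is preserving this invariant throughout the induction; once the two ultrafilter facts above are in hand, the remainder combines the standard idempotent $FP$-set construction with routine avoidance of finitely many non-$p$ sets.
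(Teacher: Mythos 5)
Your argument is correct, but it takes a genuinely different route from the paper's. The paper proves Theorem~\ref{t4.3} by a bare-hands induction that never invokes idempotents: at stage $n+1$ it uses only the non-isolation of $p$ in $Gp$ to find $g_{n+1}$ with $g_{n+1}p$ lying in the open neighbourhood $\bigcap_{\varepsilon}(g_0^{\varepsilon_0}\cdots g_n^{\varepsilon_n})^{-1}\overline{P}$ of $p$ and with all $2^{n+2}$ translates $g_0^{\varepsilon_0}\cdots g_{n+1}^{\varepsilon_{n+1}}p$ distinct, and then separates these finitely many distinct ultrafilters by a single clopen set $\overline{P_{n+1}}$ (Hausdorffness of $\beta X$) before picking $x_{n+1}\in P_{n+1}$. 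You instead pass through Theorem~\ref{t4.1}(iii) to get an idempotent $u\in(G\setminus St(p))^*$ with $up=p$ and run the Galvin--Glazer construction inside $T=\{g:g^{-1}P\in p\}$. The invariants are actually the same in disguise: the paper's condition (7), $|\{h_\varepsilon p\}|=2^{n+1}$, is exactly your condition $h_{\varepsilon'}^{-1}h_\varepsilon\notin St(p)$, and its condition (6), $P\in h_\varepsilon p$, is exactly $h_\varepsilon\in T$. What differs is how the invariant is propagated: the paper does it by direct topological choice in $\beta X$, while you reduce everything to showing that finitely many sets (the star-sets of $T$, $G\setminus St(p)$, the complements of the cosets $aSt(p)$ with $a\notin St(p)$, and the sets $\{g:g^{-1}tg\notin St(p)\}$ with $t\notin St(p)$) all lie in the single ultrafilter $u$; your two continuity arguments establishing the latter two memberships are sound, and the identification of $\{x:gx=x\}\notin p$ with $g\notin St(p)$ via Lemma 3.33 of \cite{b8} is the right tool for securing condition (2). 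The trade-off: the paper's proof is more elementary and self-contained (it needs neither Ellis's theorem nor the semigroup $\beta G$), whereas yours is heavier on prerequisites but mechanizes the bookkeeping --- every choice of $g_n$ is just an element of a finite intersection of members of $u$ --- and makes transparent why the construction never gets stuck.
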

\begin{proof} We choose $g_0\in G$ such that $p\neq g_0p$, $P\in g_0p$ and take $P_0\subseteq P$, $P_0\in p$ such that $g_0P_0\cap P_0=\varnothing$.
We pick an arbitrary $x_0\in P_0$.

Suppose that the elements $g_0,...,g_n$ from $G$ and $x_0,...,x_n$ from $X$ have been chosen so that
\begin{itemize}
\item[(3)] $g_0^{\e_0}...g_k^{\e_k}x_k\in P$ for all $\e_i\in\{0,1\}$ and $k\le n$;
\item[(4)] $\{g_0^{\e_0}...g_k^{\e_k}x_k:\e_i\in{0,1}\}\cap\{g_0^{\e_0}...g_m^{\e_m}x_m:\e_i\in\{0,1\}\}=\varnothing$ for all $k<m\le n$;
\item[(5)] $|\{g_0^{\e_0}...g_k^{\e_k}x_k:\e_i\in{0,1}\}|=2^{k+1}$ for all $k\le n$;
\item[(6)] $P\in g_0^{\e_0}...g_k^{\e_k}p$ for all $\e_i\in\{0,1\}$ and $k\le n$;
\item[(7)] $|\{g_0^{\e_0}...g_k^{\e_k}p:\e_i\in{0,1}\}|=2^{k+1}$ for all $k\le n$.
\end{itemize}

Since $p$ is not isolated in $Gp$, we use $(6)$ and $(7)$ to choose $g_{n+1}\in G$ such that $P\in g_0^{\e_0}...g_{n+1}^{\e_{n+1}}p$ for all $\e_i\in\{0,1\}$ and $|\{g_0^{\e_0}...g_{n+1}^{\e_{n+1}}p:\e_i\in\{0,1\}\}|=2^{n+2}$.

Then we choose $P_{n+1}\in p$ such that $g_0^{\e_0}...g_{n+1}^{\e_{n+1}}P_{n+1}\subseteq P$ for all $\e_i\in\{0,1\}$ and
$$g_0^{\e_0}...g_{n+1}^{\e_{n+1}}P_{n+1}\cap g_0^{\delta_0}...g_{n+1}^{\delta_{n+1}}P_{n+1}=\varnothing$$
for all distinct $(\e_0,...,\e_{n+1})$ and $(\delta_0,...,\delta_{n+1})$ from $\{0,1\}^{n+2}$

We pick $x_{n+1}\in P_{n+1}$ so that 
$$\{g_0^{\e_0}...g_{n+1}^{\e_{n+1}}x_{n+1}:\e_i\in\{0,1\}\}\cap\{g_0^{\e_0}...g_k^{\e_k}x_k:\e_i\in\{0,1\}\}=\varnothing$$ for each $k\le n$.

After $\w$ steps, we get the sequences $(g_n)_{n\in\w}$ and $(x_n)_{n\in\w}$ which define the desired $FP$-set in $P$.
\end{proof}

\begin{Th}\label{t4.4} For an infinite subset $A$ of a $G$-space $X$, the following statements are equivalent
\begin{itemize}
\item[{\it (i)}] $Gp$ is discrete for each $p\in A^*$;
\item[{\it (ii)}] $A$ contains no  piecewise shifted $FP$-sets.
\end{itemize}\end{Th}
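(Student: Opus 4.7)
The direction $(ii)\Rightarrow(i)$ is immediate by contraposition: if some $p\in A^*$ has $Gp$ non-discrete, Theorem~\ref{t4.3} applied to $P=A\in p$ delivers a piecewise shifted $FP$-set inside $A$. The substance of the proof lies in the converse direction, which I would also argue contrapositively.

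So assume $A$ contains a piecewise shifted $FP$-set $Y$ with witnessing sequences $(g_n)$, $(x_n)$; set $Y_n=\{g_n^{\e_n}\cdots g_m^{\e_m}x_m:m\ge n,\ \e_j\in\{0,1\}\}$ and $T_n=\{g_{i_1}\cdots g_{i_k}:n\le i_1<\cdots<i_k,\ k\ge 1\}$. A standard Galvin--Glazer calculation (using $gT_{j+1}\subseteq T_n$ whenever $g\in T_m$, $m\ge n$, has maximal index $j$) shows that $E=\bigcap_n\overline{T_n}\subseteq\beta G$ is a closed subsemigroup, so it contains an idempotent $\varepsilon$ by Ellis's theorem; since $\bigcap_n T_n=\varnothing$ we automatically have $\varepsilon\in G^*$. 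By compactness I pick any non-principal $q\in\beta X$ with $\{x_k:k\ge n\}\in q$ for all $n$, and set $p=\varepsilon q$. The goal is to show $p\in A^*$ and that $p$ is not isolated in $Gp$; then $Gp$ is not discrete and $(i)$ fails.

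Three facts are needed. First, $Y_m\in p$ for every $m$ (so $Y=Y_0\in p$ and $p\in A^*$): for $g\in T_m$ of top index $j$ and any $n>j$, rereading the word yields $gx_n\in Y_m$, whence $T_m\subseteq\{g:g^{-1}Y_m\in q\}\in\varepsilon$, giving $Y_m\in\varepsilon q=p$. Second, $\varepsilon p=p$ by idempotence. Third, $G\setminus St(p)\in\varepsilon$; together with the previous two this gives $\varepsilon\in(G\setminus St(p))^*$ and $\varepsilon p=p$, so Theorem~\ref{t4.1}(iii)$\Rightarrow$(i) shows $p$ is not isolated in $Gp$.

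The heart of the argument, and its main obstacle, is the third fact, and this is exactly where conditions $(1)$ and $(2)$ in the definition of a piecewise shifted $FP$-set become indispensable. Together they force each $y\in Y$ to have a \emph{unique} representation $g_0^{\e_0}\cdots g_n^{\e_n}x_n$. For any $g=g_{i_1}\cdots g_{i_k}\in T_0$ and any $y\in Y_{i_k+1}$, the unique representation of $y$ has $\e_{i_1}=\cdots=\e_{i_k}=0$, whereas the representation of $gy$, obtained by reading the string $g_{i_1}\cdots g_{i_k}\cdot y$ back as a word of the required form, has those same coordinates equal to $1$; so by uniqueness $gy\ne y$. Thus $\mathrm{Fix}(g)\cap Y_{i_k+1}=\varnothing$, and since $Y_{i_k+1}\in p$ this gives $\mathrm{Fix}(g)\notin p$, i.e.\ $g\notin St(p)$. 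Hence $T_0\subseteq G\setminus St(p)$, and as $T_0\in\varepsilon$ the conclusion $G\setminus St(p)\in\varepsilon$ follows, completing the proof.
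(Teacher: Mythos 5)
Your proof is correct and follows the paper's argument essentially verbatim: $(ii)\Rightarrow(i)$ via Theorem~\ref{t4.3}, and $(i)\Rightarrow(ii)$ by taking $p=\varepsilon q$ for an idempotent $\varepsilon$ living on the $FP$-sets of $(g_n)_{n\ge m}$ and $q$ concentrated on $\{x_n:n\in\w\}$, then invoking Theorem~\ref{t4.1}. Your only additions are re-deriving the idempotent via Galvin--Glazer and Ellis instead of citing Theorem 5.12 of Hindman--Strauss, and spelling out the uniqueness-of-representation argument behind the paper's one-line assertion that $g_m^{\e_m}\cdots g_n^{\e_n}\in St(p)$ only when all exponents vanish.
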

\begin{proof}
The implication $(ii)\Rightarrow(i)$ follows from Theorem~\ref{t4.3}.
To prove $(i)\Rightarrow(ii)$, we suppose that $A$ contains a piecewise shifted $FP$-set $Y$ defined by the sequence $(g_n)_{n\in\w}$ and $(x_n)_{n\in\w}$.
By \cite[Theorem 5.12]{b8}, there is an idempotent $\e\in G^*$ such that, for each $m\in\w$,
$$\{g_m^{\e_m}...g_n^{\e_n}:\e_i\in\{0,1\},m<n<\w\}\in\e.$$

We take an arbitrary $q\in A^*$ such that $\{x_n:n\in\w\}\in q$. Put $p=\e q$.
Since $Y\subseteq A$, we have $p\in A^*$. Clearly, $\e p=p$. 
We note that $g_m^{\e_m}...g_n^{\e_n}\in St(p)$ if and only if $\e_m=...=\e_n=0$.
Hence $G\setminus St(p)\in\e$ and, applying Theorem~\ref{t4.1}, we conclude that $p$ is not isolated in $Gp$.
\end{proof}

\section{Scattered and sparse subsets of $G$-spaces}

Given $F\in[G]^{<\w}$ and $x\in X$, we denote $B(x,F)=Fx\cup\{x\}$ and say that $B(x,F)$ is a {\em ball of radius $F$ around $x$}.
For subset $Y$ of $X$ and $y\in Y$, we denote $B_Y(y,F)=B(y,F)\cap Y$.

A subset $A$ of $X$ is called
\begin{itemize}
\item{} {\em scattered} if, for every infinite subset $Y$ of $X$, there exists $H\in[G]^{<\w}$ such that, for every $F\in[G]^{<\w}$ there is $y\in Y$ such that $B_Y(y,F)\cap B_Y(y,H)=\varnothing$;
\item{} {\em sparse} if, for every infinite subset $Y$ of $X$, there exists $H\in[G]^{<\w}$ such that, for every $F\in[G]^{<\w}$ there is $y\in Y$ such that $B_A(y,F)\cap B_A(y,H)=\varnothing$.
\end{itemize}

Clearly, each sparse subset is scattered. The sparse subsets of groups were introduced in \cite{b7} and studied in \cite{b9} \cite{b10}.
From the asymptotic point of view \cite{b16}, the scattered subsets of $G$-spaces can be considered as counterparts of the scattered subspaces of topological spaces.

\begin{Ps}  A subset $A$ of a $G$-space $X$ is sparse if and only if $\vm_p(A)$ is finite for each $p\in X^*$.\end{Ps}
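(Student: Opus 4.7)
The plan is to prove both implications contrapositively, translating between the combinatorial condition defining sparseness and the topological condition on the orbit $Gp\subseteq\beta X$ via the standard ultrafilter compactness argument.

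($\Rightarrow$) Suppose $\vm_p(A)$ is infinite for some $p\in X^*$, and fix a sequence $(g_n)_{n\in\w}$ in $G$ with the $g_np$ pairwise distinct and $A\in g_np$ for every $n$. Since the ultrafilters $g_0p,\dots,g_np$ are distinct points of $\beta X$, choose $P_n\in p$ so that $g_0P_n,\dots,g_nP_n$ are pairwise disjoint subsets of $A$. By diagonalisation, pick $y_n\in P_n$ so that the collection $\{g_iy_n:i\le n,\ n\in\w\}$ consists of pairwise distinct points, and set $Y=\{y_n:n\in\w\}$. This $Y$ is infinite, and each $y_n$ carries $n+1$ distinct translates $g_0y_n,\dots,g_ny_n$ all lying in $A$. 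I would then check directly from the definition that no finite $H\subseteq G$ can serve as a sparseness witness for this $Y$: given any candidate $H$, the finite set $F=H\cup\{g_0,\dots,g_{|H|}\}$ forces each sufficiently large $y_n$ to realise a common element of $B_A(y_n,F)$ and $B_A(y_n,H)$ among the translates $g_iy_n$ with $g_i\in H$, contradicting sparseness after trimming finitely many elements of $Y$.

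($\Leftarrow$) Suppose $A$ is not sparse, witnessed by an infinite $Y\subseteq X$. Recursively construct group elements $g_0,g_1,\dots$ and a decreasing chain of infinite subsets $Y_0\supseteq Y_1\supseteq\cdots$ of $Y$ so that $g_nY_n\subseteq A$: at stage $n$, apply the non-sparseness hypothesis to $Y_{n-1}$ with $H=\{g_0,\dots,g_{n-1}\}$ to produce the corresponding adversarial $F_n$, and extract from it a new translation $g_n$ together with an infinite $Y_n\subseteq Y_{n-1}$ on which $g_n$ sends $Y_n$ into $A$. Extend $\{Y_n:n\in\w\}$ to an ultrafilter $p\in Y^*$; then $A\in g_np$ for every $n$, so $\vm_p(A)\supseteq\{g_np:n\in\w\}$ is infinite provided the $g_np$ are pairwise distinct.

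The main obstacle is this last distinctness requirement, since a priori the translates $g_np$ could collapse modulo $St(p)$. I plan to address it by strengthening the recursion: at stage $n$, alongside $Y_n$ and $g_n$, choose sets $E_{n,i}\subseteq Y_n$ for $i<n$ on which the map $x\mapsto g_ng_i^{-1}x$ has no fixed points, and require every $E_{n,i}$ to belong to $p$ when $p$ is extended. By the description of $St(p)$ recalled just before Theorem~\ref{t4.1} (from \cite[Lemma 3.33]{b8}), this forces $g_ng_i^{-1}\notin St(p)$, and hence $g_np\ne g_ip$. The existence of such $E_{n,i}$ inside the non-sparseness witness reflects the fact that, at each stage, the adversarial $F_n$ is providing genuinely new pairs of points of $A$ in the same $F_n$-ball, rather than repeating the translations already used; a short pigeonhole on these pairs produces both the new $g_n$ and the required $E_{n,i}$.
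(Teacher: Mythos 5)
Your overall strategy is the standard one --- and, since the paper itself only refers the reader to the proof of Theorem 10 in \cite{b14}, a self-contained argument of exactly this shape is what is intended. Before assessing it, note that the definition of ``sparse'' as printed (with $B_A(y,F)\cap B_A(y,H)=\varnothing$) cannot be taken literally: $y$ always lies in $B(y,F)\cap B(y,H)$, so any infinite subset of $A$ would witness non-sparseness and the proposition would be false. The intended condition is $B_A(y,F)\setminus B_A(y,H)=\varnothing$, i.e.\ $B_A(y,F)\subseteq B_A(y,H)$. Read against this corrected definition, your ($\Leftarrow$) direction is correct and even slightly over-engineered: once the pigeonhole at stage $n$ is performed with $H=\{g_0,\dots,g_{n-1}\}$, you already obtain $g_ny\in A\setminus(Hy\cup\{y\})$ for \emph{every} $y\in Y_n$, so $Y_n$ itself is a set in $p$ on which $x\mapsto g_i^{-1}g_nx$ (note: not $g_ng_i^{-1}x$) has no fixed points; the separate sets $E_{n,i}$ are unnecessary, and \cite[Lemma 3.33]{b8} then gives $g_np\ne g_ip$ directly.

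The genuine gap is in the verification at the end of the ($\Rightarrow$) direction. The construction of $Y=\{y_n:n\in\w\}$ with $g_0y_n,\dots,g_ny_n$ distinct points of $A$ is fine, but the reason $Y$ defeats every $H$ is not that some witnessing element lies ``among the translates $g_iy_n$ with $g_i\in H$'': a generic $H$ contains none of the $g_i$, so that set is empty and the sentence proves nothing. The correct reason is a count: for $F\supseteq\{g_0,\dots,g_{|H|+1}\}$ and $n\ge|H|+1$, the set $B_A(y_n,F)$ contains the $|H|+2$ distinct points $g_0y_n,\dots,g_{|H|+1}y_n$, while $B_A(y_n,H)\subseteq Hy_n\cup\{y_n\}$ has at most $|H|+1$ elements, so $B_A(y_n,F)\not\subseteq B_A(y_n,H)$. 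Moreover, ``trimming finitely many elements of $Y$'' is not available to you: $Y$ must be exhibited before $H$ is quantified, and the number of exceptional indices depends on $H$. The finitely many $y_n$ with $n<|H|+1$ must instead be handled by enlarging $F$: since $X$ is transitive and $A$ is infinite (it has a nonempty ultracompanion), for each such $n$ there is $f_n\in G$ with $f_ny_n\in A\setminus(Hy_n\cup\{y_n\})$, and adjoining these $f_n$ to $F$ completes the argument. With these repairs both directions go through.
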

\begin{proof} Repeat the arguments proving Theorem $10$ in \cite{b14}.
\end{proof}

\begin{Ps} A subset $A$ of a $G$-space $X$ is scattered if and only if, for every infinite subset $Y$ of $X$, there exists $p\in Y^*$ such that $\vm_p(Y)$ is finite.\end{Ps}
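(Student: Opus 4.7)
The plan is to follow the template of the preceding (sparse) proposition, replacing the universal quantification ``$\vm_p(A)$ is finite for every $p \in X^*$'' by the weaker existential quantification ``$\vm_p(Y)$ is finite for some $p \in Y^*$'' that is appropriate to scatteredness. I verify two implications, and both rest on the correspondence between the ball-theoretic scattered-at-$y$ condition $B_Y(y, F) \subseteq B_Y(y, H)$ and the ultrafilter statement ``$gp \in Hp \cup \{p\}$ whenever $gp \in \vm_p(Y)$''.

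For the $(\Leftarrow)$ direction I fix an infinite $Y \subseteq X$ together with the promised $p \in Y^*$ having $\vm_p(Y)$ finite, enumerate $\vm_p(Y) = \{g_1 p, \dots, g_n p\}$ with $g_1 = e$ (possible since $p \in \vm_p(Y)$), and put $H = \{g_1, \dots, g_n\}$. Given $F \in [G]^{<\w}$, I split $F = F_1 \cup F_2$ with $F_1 = \{g \in F : gp \in \vm_p(Y)\}$: for $g \in F_1$ I pick $i(g)$ with $gp = g_{i(g)} p$, so that by \cite[Lemma 3.33]{b8} the set $\{y : gy = g_{i(g)} y\}$ lies in $p$; for $g \in F_2$, $Y \notin gp$ yields $\{y : gy \notin Y\} \in p$. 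Intersecting these finitely many $p$-sets with $Y$ produces a non-empty $p$-set, any of whose elements $y$ satisfies $B_Y(y, F) \subseteq B_Y(y, H)$ and hence witnesses the scattered condition for $Y$, $F$, $H$.

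For the $(\Rightarrow)$ direction I apply scatteredness to $Y$ to obtain $H \in [G]^{<\w}$ and define $Y_F := \{y \in Y : B_Y(y, F) \subseteq B_Y(y, H)\}$ for each $F \in [G]^{<\w}$. Since $Y_{F_1 \cup F_2} \subseteq Y_{F_1} \cap Y_{F_2}$ and each $Y_F$ is non-empty, $\{Y_F\}$ is a filter base on $Y$, which I extend to an ultrafilter $p$. To argue $p \in Y^*$, note that if $p$ were principal at some $y_0 \in Y$, then $y_0 \in Y_F$ for every $F \in [G]^{<\w}$ would force $Gy_0 \cap Y \subseteq Hy_0 \cup \{y_0\}$ to be finite, contradicting infiniteness of $Y$ combined with transitivity of the action. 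To bound $\vm_p(Y)$, given $gp \in \vm_p(Y)$ I take $F := H \cup \{g, e\}$ so that $W := Y_F \cap g^{-1} Y \in p$; for every $y \in W$ the inclusion $gy \in Fy \cap Y \subseteq Hy \cup \{y\}$ gives $gy = h_y y$ for some $h_y \in H \cup \{e\}$, and a pigeonhole argument on the finite set $H \cup \{e\}$ together with \cite[Lemma 3.33]{b8} isolates a single $h$ with $gp = hp \in Hp \cup \{p\}$, so $\vm_p(Y)$ is finite. The main obstacle is precisely the non-principality of $p$ in this $(\Rightarrow)$ step: the filter-base property and the pigeonhole conclusion are routine, but freeness of $p$ genuinely requires both transitivity of the $G$-action on $X$ and the infiniteness of $Y$.
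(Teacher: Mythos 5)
Your proof is correct and is essentially the argument of Proposition 1 in \cite{b3} that the paper's proof invokes, transported to $G$-spaces: the sets $Y_F=\{y\in Y: B_Y(y,F)\subseteq B_Y(y,H)\}$ form a filter base whose every ultrafilter extension is free (here you rightly use transitivity of the action together with the infiniteness of $Y$), and \cite[Lemma 3.33]{b8} converts coincidence of two translates on a $p$-large set into equality of the points $gp$, $hp$, giving both the bound $\vm_p(Y)\subseteq (H\cup\{e\})p$ and the converse selection of a witness $y$. You also correctly repaired the misprint in the definition, reading $B_Y(y,F)\cap B_Y(y,H)=\varnothing$ (which is never satisfiable, since $y$ lies in both balls) as $B_Y(y,F)\setminus B_Y(y,H)=\varnothing$, which is the ``asymptotically isolated balls'' condition of \cite{b3}.
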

\begin{proof} Repeat the arguments proving Proposition $1$ in \cite{b3}.
\end{proof}

To formulate further results, we need some asymptology (see \cite[Chapter 1]{b16}).
Let $G_1$, $G_2$ be groups, $X_1$ be a $G_1$-space, $X_2$ be a $G_2$-space, $Y_1\subseteq X_1$, $Y_2\subseteq X_2$.
A mapping $f:Y_1\to Y_2$ is called a {\em $\prec$-mapping} if, for every $F\in[G_1]^{<\w}$, there exists $H\in[G_2]^{<\w}$ such that, for every $y\in Y_1$
$$f(B_{Y_1}(y,F))\subseteq B_{Y_2}(f(y),H).$$
If $f$ is a bijection such that $f$ and $f^{-1}$ are $\prec$-mappings, we say that $f$ is an {\em asymorphism}. 
The subset subsets $Y_1$ and $Y_2$ are {\em coarsely equivalent} if there exist asymorphic subsets $Z_1\subseteq Y_1$, $Z_2\subseteq Y_2$ such that $Y_1=B_{Y_1}(Z_1,F)$, $Y_2=B_{Y_2}(Z_2,H)$ for some $F\in[G_1]^{<\w}$, $H\in[G_2]^{<\w}$.
We say that a property $\PP$ of subsets of $G$-spaces is {\em coarse} if $\PP$ is stable under coarse equivalent, and note that "sparse" and "scattered" are coarse properties.

In asymptology, the group $\oplus_\w\ZZ_2$ is known under name the Cantor macrocube, for its coarse characterization see \cite{b5}.

\begin{Th} A subset $A$ of a $G$-space $X$ is sparse if and only if $A$ has no subsets asymorphic to the subset $W_2=\{g\in\oplus_\w\ZZ_2: supt g\le2\}$ of the Cantor macrocube.\end{Th}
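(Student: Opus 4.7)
The plan uses the characterization of sparseness via ultracompanions from the preceding proposition: $A$ is sparse if and only if $\vm_p(A)$ is finite for every $p\in X^*$. I shall combine this with the fact (noted in the paper) that sparseness is a coarse property, together with the trivial observation that $\vm_p(A')\subseteq\vm_p(A)$ whenever $A'\subseteq A$, so that sparseness passes to subsets.

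For the direction ``$A$ sparse $\Rightarrow$ $A$ has no subset asymorphic to $W_2$'', I would first verify directly that $W_2$ itself is not sparse inside the Cantor macrocube $\oplus_\w\ZZ_2$. Let $p$ be any free ultrafilter concentrated on the basis $\{e_n:n\in\w\}$. For each $n\in\w$, the set $\{e_n+e_m:m\in\w,m\neq n\}$ lies in $W_2$ and also in the translated ultrafilter $e_n+p$, since subtracting $e_n$ yields a cofinite subset of the basis. Hence $e_n+p\in\vm_p(W_2)$, and these ultrafilters are pairwise distinct: otherwise $e_n+e_{n'}\in St(p)$, which is impossible because $\{e_m:m\in\w\}$ and its $(e_n+e_{n'})$-translate are disjoint yet both would have to belong to $p$. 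Thus $\vm_p(W_2)$ is infinite and $W_2$ is not sparse. Applying monotonicity and coarse invariance, any subset of $A$ asymorphic to $W_2$ forces $A$ to be non-sparse.

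For the direction ``$A$ not sparse $\Rightarrow$ $A$ contains a subset asymorphic to $W_2$'', fix $p\in X^*$ with $\vm_p(A)$ infinite and pick $(h_n)_{n\in\w}$ in $G$ so that the ultrafilters $h_np$ are pairwise distinct and $A\in h_np$ for every $n$. The plan is to build by induction, for each nonempty finite $F\subseteq\w$ with $|F|\leq 2$, an element $y_F\in\bigcap_{n\in F}h_n^{-1}A\in p$, choosing $y_F$ at each step to avoid the images of previously constructed elements under a prescribed finite list of translations by the $h_n$'s. Setting $a_n:=h_ny_{\{n\}}$ and $a_{\{n,m\}}:=h_ny_{\{n,m\}}$ (with $n<m$), and fixing any $a_0\in A$, I define $f:W_2\to A$ by $f(0)=a_0$, $f(e_n)=a_n$, $f(e_n+e_m)=a_{\{n,m\}}$, and let $Z=f(W_2)$.

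The main obstacle is showing that $f$ is an asymorphism onto $Z$. The forward $\prec$-property is almost mechanical: given a finite $F\subseteq\oplus_\w\ZZ_2$ contained in the subgroup $\langle e_0,\ldots,e_{N-1}\rangle$, each $F$-ball in $W_2$ maps into an $H$-ball in $Z$, where $H$ is the finite set of all products of at most two elements of $\{h_0^{\pm 1},\ldots,h_{N-1}^{\pm 1}\}$ needed to shuttle between the $a_n$'s and the $a_{\{n,m\}}$'s for $m<N$. The reverse $\prec$-property is more delicate: one must rule out ``accidental'' bounded closeness between $a_*$'s that are not close in $W_2$. This is precisely the role of the disjointness demands imposed at each step of the induction---at stage $F$, one restricts $y_F$ to a subset that still lies in $p$ but avoids the finitely many cosets that would create a spurious $H$-relation with already-constructed elements of $Z$; such avoidance is possible because the ultrafilters $h_np$ are pairwise distinct, which (in the spirit of the construction in Theorem~\ref{t4.3}) translates into the non-$p$-membership of each finite union of cosets that would cause trouble.
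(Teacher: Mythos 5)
Your first direction is essentially sound: the computation showing that $\vm_p(W_2)$ is infinite for a free ultrafilter $p$ concentrated on the basis $\{e_n:n\in\w\}$ is correct (the two translated copies of the basis meet only in $\{e_n,e_{n'}\}$, which is enough), and combining the ultracompanion criterion for sparseness with the asserted coarse invariance of ``sparse'' and the monotonicity $\vm_p(Z)\subseteq\vm_p(A)$ for $Z\subseteq A$ does give that a sparse set contains no copy of $W_2$. Since the paper only refers to an external argument here, writing this out is useful.

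The converse, however, has a genuine gap, and it sits precisely in the step you dismiss as ``almost mechanical''. In $W_2$ the point $e_m+e_n$ lies at distance $e_m$ from $e_n$ \emph{for every} $n$, so any asymorphism $f$ must place $f(e_m+e_n)$ inside $H_m\,f(e_n)\cup\{f(e_n)\}$ for a single finite set $H_m$ working uniformly in $n$. Your map sends $e_n$ to $h_ny_{\{n\}}$ and $e_m+e_n$ (with $m<n$) to $h_my_{\{m,n\}}$, where $y_{\{n\}}$ and $y_{\{m,n\}}$ are chosen independently of each other inside sets belonging to $p$, subject only to \emph{avoidance} (negative) constraints. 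No finite set of products of $h_0^{\pm1},\dots,h_m^{\pm1}$ can be expected to carry $h_ny_{\{n\}}$ to $h_my_{\{m,n\}}$ for all $n$, so the forward $\prec$-property already fails for the single radius $F=\{e_m\}$; avoidance cannot repair this, because what is missing is a positive relation between the image points. The standard fix (and the construction behind the cited proof) is to normalize so that $h_0$ is the identity, choose $a_n\in A\cap\bigcap_{m<n}h_m^{-1}A$ (a set belonging to $p$) recursively, and set $f(e_n)=a_n$, $f(e_m+e_n)=h_ma_n$ for $m<n$: now $f(e_m+e_n)$ is the translate of $f(e_n)$ by the fixed element $h_m$, and the forward condition holds with $H=\{h_0^{\pm1},\dots,h_N^{\pm1}\}$ together with finitely many exceptional pairs supported in $\{0,\dots,N\}$. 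Your avoidance scheme is then what is needed for injectivity and the reverse condition, though even there you should note that the finite set $H$ is not known in advance, so the forbidden sets must be taken from an increasing exhaustion of $[G]^{<\w}$, and the pairs $h_ma_n,\ h_{m'}a_n$ with the same $n$ must be separated using the fact that the ultrafilters $h_mp$ are pairwise distinct.
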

\begin{proof} Apply arguments from \cite[Proof of Theorem 3]{b14}. \end{proof}

\begin{Th}\label{t5.4} For a subset $A$ of a $G$-space $X$, the following statements are equivalent 
\begin{itemize}
\item[{\it (i)}] $A$ is scattered;
\item[{\it (ii)}] $\vm_p(A)$ is discrete for each $p\in X^*$;
\item[{\it (iii)}] $A$ contains no piecewise shifted $FP$-sets;
\item[{\it (iv)}] $A$ contains no subsets coarsely equivalent to the Cantor macrocube.
\end{itemize}\end{Th}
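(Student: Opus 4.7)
The strategy is to treat condition~(ii) as a hub and reduce each of (i), (iii), (iv) to it. For (ii)$\Leftrightarrow$(iii) the bridge is that $A^*$ is clopen in $X^*$: a point $gp\in\vm_p(A)=A^*\cap Gp$ is isolated in the relative topology of $\vm_p(A)$ if and only if it is isolated in the full orbit $Gp$ (one direction is restriction; the other uses that any neighbourhood of $gp$ intersected with $A^*$ is still a neighbourhood of $gp$). Combined with the remark after Theorem~\ref{t4.1} that $Gp$ is discrete as soon as it has a single isolated point, this shows that $\vm_p(A)$ is discrete for every $p\in X^*$ exactly when $Gp$ is discrete for every $p\in A^*$, and Theorem~\ref{t4.4} converts the latter condition into~(iii).

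For (i)$\Leftrightarrow$(ii) I would imitate the template of the preceding proposition on scatteredness. The implication (i)$\Rightarrow$(ii) goes contrapositively: a non-isolated $gp\in\vm_p(A)$ yields a net $g_n p\to gp$ with $g_n p\in A^*$ pairwise distinct, and selecting representatives in $X$ from each $g_n p$ produces an infinite $Y\subseteq A$ whose ball-overlap structure defeats every candidate $H$ in the definition of scatteredness. For (ii)$\Rightarrow$(i), pick an infinite $Y\subseteq A$ witnessing non-scatteredness and apply ultrafilter compactness along the radii $F$ furnished by the negated condition to extract $p\in Y^*$ with a non-isolated point in $\vm_p(Y)\subseteq\vm_p(A)$.

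For (iii)$\Leftrightarrow$(iv) the plan is to recognise each piecewise shifted $FP$-set as a coarse copy of $\oplus_\w\ZZ_2$. The $n$-th layer $\{g_0^{\e_0}\cdots g_n^{\e_n}x_n:\e_i\in\{0,1\}\}$ carries the combinatorial structure of the ball of radius $n$ around $0$ in the macrocube, and conditions (1)--(2) guarantee that the layer-by-layer bijection extends to an asymorphism onto its image; thus the $FP$-set is coarsely equivalent to $\oplus_\w\ZZ_2$. Conversely, any subset of $A$ coarsely equivalent to the macrocube contains arbitrarily large coordinate sub-cubes, and a diagonal extraction recovers the sequences $(g_n),(x_n)$ required to witness a piecewise shifted $FP$-set inside $A$.

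The hardest step will be (ii)$\Rightarrow$(i), which in its contrapositive form requires turning a combinatorial witness of non-scatteredness into an ultrafilter $p$ whose companion $\vm_p(A)$ fails to be discrete. I would handle this by channelling the extraction through the ultrafilter construction of Theorem~\ref{t4.3}, so that the disjointness condition (1) and the dimension condition (2) for the eventual $FP$-set are absorbed by the ultrafilter property rather than enforced by a direct induction.
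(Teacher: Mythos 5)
Your reduction of (ii) to (iii) via the clopenness of $A^*$ and Theorem~\ref{t4.4} is sound and is exactly how the paper handles that equivalence. The two other bridges, however, each contain a genuine gap. First, the claim that a piecewise shifted $FP$-set is itself coarsely equivalent to $\oplus_\w\ZZ_2$ is false. Take $X=G=\ZZ$, $g_i=2^i$ and $x_n$ growing fast: conditions (1) and (2) hold, but the $n$-th layer $\{x_n+\sum_i\e_i2^i\}$ is the full integer interval $[x_n,x_n+2^{n+1})$, so the $FP$-set is a union of arbitrarily long intervals. Its $\{1,-1\}$-connected components are unbounded, so it has asymptotic dimension $1$, while the Cantor macrocube has asymptotic dimension $0$; no asymorphism (indeed no coarse equivalence) onto the macrocube exists, and your layer-by-layer bijection fails to have a $\prec$-inverse (the binary representations of $m$ and $m+1$ can differ in arbitrarily many coordinates). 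The implication ``contains an $FP$-set $\Rightarrow$ contains a macrocube copy'' is still true, but it requires passing to a further subset and invoking the characterization of the macrocube from the Banakh--Zarichnyi paper; this is why the paper ties (iv) to (i) directly (``the macrocube is not scattered'' for one direction, the characterization theorem for the other) rather than to (iii).

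Second, your plan for (ii)$\Rightarrow$(i) does not confront the actual difficulty. Negating scatteredness (or using the ultracompanion criterion of the proposition preceding the theorem) only gives you an infinite $Y\subseteq A$ all of whose companions $\vm_p(Y)$, $p\in Y^*$, are \emph{infinite}; an infinite subset of $X^*$ can perfectly well be discrete, and although an infinite $\vm_p(Y)$ has a limit point $q$ by compactness, $q$ lies in $cl(Gp)$ and need not lie in $Gp$, so you cannot conclude that any companion fails to be discrete. Invoking Theorem~\ref{t4.3} here is circular, since its hypothesis is precisely the non-discrete orbit you are trying to produce. The paper's device is to prove (ii)$\Rightarrow$(i) directly: for an infinite $Y\subseteq A$ one takes a minimal element $M$ of the family $\{F\cap Y^*: F$ a closed invariant subset of $X^*\}$ and a point $p\in M$; if $\vm_p(Y)$ were infinite with limit point $q$, minimality forces $p\in cl(Gq)$, i.e.\ $p=rq$ for some $r\in\beta G$, and the base description of $rq$ transports the accumulation at $q$ back to $p$ itself, contradicting discreteness of $\vm_p(A)$. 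Some such mechanism for upgrading ``all companions of $A$ are discrete'' to ``some companion of $Y$ is finite'' is indispensable and is absent from your sketch; the same issue of securing the universally quantified failure over all $y\in Y$ also undercuts your net-extraction argument for (i)$\Rightarrow$(ii).
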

\begin{proof} The equivalence $(ii)\Rightarrow(iii)$ follows from Theorem~\ref{t4.4}. 
To prove $(i)\Rightarrow(iii)$, repeat the arguments from \cite[Proof of Theorem 1]{b3}.

$(ii)\Rightarrow(i)$. Let $Y$ be an infinite subset of $A$. 
We denote by $\FF$ the family of all closed invariant subsets of $X^*$ and put $\FF_Y=\{F\cap Y^*:F\in\FF\}$.
By the Zorn's lemma, there exists minimal by inclusion element $M\in\FF_Y$.
We take an arbitrary $p\in M$ and show that $\vm_p(Y)$ is finite. Assume the contrary.
Then the set $\vm_p(Y)$ has a limit point $q$. Since $M$ is minimal and $p\in M$, there exists $r\in\beta G$ such that $p=rq$.
By the definition of the action of $\beta G$ on $\beta X$, for every $P\in p$, there exists $Q\in q$ and $g\in G$ such that $gQ\subseteq P$.
It follows that $p$ is a limit point of $\vm_p(Y)$. Hence, $\vm_p(Y)$ is not discrete and we get a contradiction.

The implication $(i)\Rightarrow(iv)$ is evident because the Cantor macrocube is not scattered. 
To prove $(iv)\Rightarrow(i)$, we use the characterization of the Cantor macrocube from \cite{b5} and the arguments from \cite[Proof of the Proposition 3]{b3}.
\end{proof}

\begin{Rm} Let $G$ be an amenable group, $A$ be scattered subset of $G$. 
By \cite[Theorem 2]{b3}, $\mu(A)=0$ for each left invariant Banach measure $\mu$ on $G$.
This statement cannot be extended to all $G$-spaces. 
As a counterexample, we take $X=\w$ and $G$ is a group of all permutations of $X$ with finite supports.
In this case, each subset of $X$ is scattered. \end{Rm}

Let $X$ be a $G$-space, $\J$ be a translation invariant ideal in $\PP_X$. We say that a subset $A$ of $X$ is
\begin{itemize}
\item{} {\em $\J$-sparse} if $\vm_p(A)$ is finite for each $p\in\check{\J}$;
\item{} {\em $\J$-scattered} if, for every subset $Y$ of $A$, $Y\notin\check{\J}$, there is $p\in\check{\J}\cap Y^*$ such that $\vm_p(Y)$ is finite.
\end{itemize}
In this context, sparse and scattered subsets coincide with $[X]^{<\w}$-sparse and $[X]^{<\w}$-scattered subsets respectively.

The arguments proving $(ii)\Rightarrow(i)$ in Theorem~\ref{t5.4} witness that $A$ is scattered provided that each point $p\in\check{\J}\cap A^*$ is isolated in $X^*$.

\begin{Qs} Assume that $A$ is $J$-scattered. Is every point $p\in\check{\J}\cap A^*$ isolated in $X^*$? \end{Qs}

If a subset $A$ of $X$ has a subset $Y\notin\J$ coarsely equivalent to $\oplus_\w\ZZ_2$ then $A$ is not $\J$-scattered.

\begin{Qs} Assume that a subset $A$ of $X$ has no subsets $Y\notin\J$ coarsely equivalent to $\oplus_\w\ZZ_2$. Is $A$ $J$-scattered? \end{Qs}

We note that the families $\sigma(\J)$ and $\partial(\J)$ of all $\J$-sparse and $\J$-scattered subsets of $X$ are translation invariant ideals in $\PP_X$ and say that $\J$ is {\em $\sigma$-complete} (resp. {\em $\partial$-complete}) if $\sigma{\J}=\J$ (resp $\partial(\J)=\J$).
We denote by $\sigma^*(\J)$ (resp. $\partial^*(\J)$) the intersection of all $\sigma$-complete (resp $\partial$-complete) ideals containing $\J$.
Clearly, $\sigma^*(\J)$ and $\partial^*(\J)$ are the smallest $\sigma$-complete and $\partial$-complete ideals such that $\J\subseteq\sigma^*(\J)$ and $\J\subseteq\partial^*(\J)$. 
We say that $\sigma^*(\J)$ and $\partial^*(\J)$ are the {\em $\sigma$-completion} and {\em $\partial$-completion} of $\J$ respectively.

We define a sequence $(\sigma^n(\J))_{n<\w}$ by the recursion: $\sigma^0(\J)=\J$, $\sigma^{n+1}(\J)=\sigma(\sigma^n(\J))$, and note that $\bigcup_{n\in\w}\sigma^n(\J)\subseteq\sigma^*(\J)$.
If $X$ is left regular, by \cite[Theorem 4(1)]{b10}, $\sigma^*(\J)=\bigcup_{n\in\w}\sigma^n(\J)$ and by \cite[Theorem 4(2)]{b10}, $\sigma^{n+1}([G]^{<\w})\neq\sigma^n([G]^{<\w})$ for each $n\in\w$.

\begin{Qs} Is $\sigma^*\J)=\bigcup_{n\in\w}\sigma^n(\J)$ for each translation invariant ideal $\J$ in an arbitrary $G$-space $X$? \end{Qs}

In contrast to $\sigma$-completion, for each translation invariant ideal $\J$ in $\PP_X$, we have $\partial^*(\J)=\partial(\J)$.
In particular the ideal $\partial([X]^{<\w})$ of all sparse subsets of $X$ is $\partial$-complete.
Indeed, assume that $A\notin\partial(\J)$ and choose $Y\subseteq A$, $Y\notin\J$ such that $\vm_p(Y)$ is infinite for each $p\in\check{\J}\cap Y^*$.
Then $Y\notin\partial(Y)$ and $A\notin\partial^2(\J)$.
Hence, $\partial^2(\J)=\partial(\J)$ so $\partial^*(\J)=\partial(\J)$.

\end{document}